\newcommand{\dR}{\mathbb{R}}
\newcommand{\dC}{\mathbb{C}}
\newcommand{\dP}{\mathbb{P}}
\newcommand{\cB}{\mathcal{B}}
\newcommand{\cF}{\mathcal{F}}
\newcommand{\cL}{\mathcal{L}}
\newcommand{\dE}{\mathbb{E}}
\newcommand{\E}{\mathds{E}}
\newcommand{\ddr}{\mathrm{d}}
\newtheorem{thm}{Theorem}[section]
\newtheorem{lem}[thm]{Lemma}
\providecommand{\keywords}[1]
{
  \small	
  \textbf{\textit{Keywords:}} #1
}
\begin{document}

\title{A new look on large deviations and concentration inequalities for the Ewens-Pitman model}


\author[1]{Bernard Bercu\thanks{bernard.bercu@math.u-bordeaux.fr}}
\author[2]{Stefano Favaro\thanks{stefano.favaro@unito.it}}
\affil[1]{\small{Institut de Math\'ematiques de Bordeaux, Universit\'e de Bordeaux, France}}
\affil[2]{\small{Department of Economics and Statistics, University of Torino and Collegio Carlo Alberto, Italy}}

\maketitle

\begin{abstract}
The Ewens-Pitman model is a probability distribution for random partitions of the set $[n]=\{1,\ldots,n\}$, parameterized by $\alpha\in[0,1)$ and $\theta>-\alpha$, with $\alpha=0$ corresponding to the Ewens model in population genetics. The goal of this paper is to provide an alternative and concise proof of the Feng-Hoppe large deviation principle for the number $K_{n}$ of partition sets in the Ewens-Pitman model with $\alpha\in(0,1)$ and $\theta>-\alpha$. Our approach leverages an integral representation of the moment-generating function of $K_{n}$ in terms of the (one-parameter) Mittag-Leffler function, along with a sharp asymptotic expansion of it. This approach significantly simplifies the original proof of Feng-Hoppe large deviation principle, as it avoids all the technical difficulties arising from a continuity argument with respect to rational and non-rational values of $\alpha$. Beyond large deviations for $K_{n}$, our approach allows to establish a sharp concentration inequality for $K_n$ involving the rate function of the large deviation principle.
\end{abstract}

\keywords{Concentration inequality; Ewens-Pitman model; large deviation principle; Mittag-Leffler function}


\section{Introduction}

The Ewens-Pitman model was introduced in \cite{Pit(95)} as a generalization of the Ewens model in population genetics \cite{Ewe(72)}. Let $n\in\mathbb{N}$, and consider a random partition of $[n]=\{1,\ldots,n\}$ into $K_{n}\in\{1,\ldots,n\}$ partition sets of which 
$K_{r,n}\in\{0,1,\ldots,n\}$ appear $r$ times, 
$$
n=\sum_{r=1}^{n}rK_{r,n}\qquad \text{and}\qquad K_{n}=\sum_{r=1}^{n}K_{r,n}.
$$ 
For $\alpha\in[0,1)$ and $\theta>-\alpha$, the Ewens-Pitman model assigns to $\mathbf{K}_{n}=(K_{1,n},\ldots,K_{n,n})$ the probability 
\begin{equation}\label{epsm}
\mathbb{P}_{\alpha,\theta}(\mathbf{K}_{n}=(k_{1},\ldots,k_{n}))=n!\frac{\left(\frac{\theta}{\alpha}\right)^{(\sum_{i=1}^{n}k_{i})}}{(\theta)^{(n)}}\prod_{i=1}^{n}\left(\frac{\alpha(1-\alpha)^{(i-1)}}{i!}\right)^{k_{i}}\frac{1}{k_{i}!},
\end{equation}
where, for any $a\in \dR$, $(a)^{(n)}$ stands for the rising factorial of $a$ of order $n$, that is 
$(a)^{(n)}=a(a+1)\cdots(a+n-1)$. The Ewens model is recovered from \eqref{epsm} by setting $\alpha=0$. The distribution \eqref{epsm} admits a sequential construction through the Chinese restaurant process \cite{Pit(95),Fen(98)}, a Poisson process construction by random sampling from the Pitman-Yor random measure \cite{Per(92),Pit(97)}, and a construction through compound Poisson random partitions \cite{Dol(21)}. We refer to \cite[Chapter 3 and Chapter 4]{Pit(06)} for an overview of the Ewens-Pitman model, including applications in a variety of research fields, e.g., population genetics, excursion theory, Bayesian statistics, combinatorics and statistical physics.

\subsection{Background}

There exists a rich body of literature on the large $n$ asymptotic behaviour of the number $K_{n}$ of partition sets in the Ewens-Pitman model, e.g., almost-sure and Gaussian fluctuations, large and moderate deviations, Berry-Esseen inequalities, laws of iterated logarithm, central limit theorems and laws of large numbers \cite{Kor(73),Pit(95),Pit(97),Fen(98),Pit(06),Fav(15),Fav(18),Dol(20),Dol(21),Ber(24),Con(24)}. Feng-Hoppe large deviation principle is one of the earliest results in the literature \cite{Fen(98)}. For $\alpha=0$, $K_{n}$ is a sum of independent Bernoulli random variables \cite{Kor(73)}, from which it follows easily that ($K_n/n)$ satisfies a large deviation principle with good rate function
\begin{displaymath}
\label{RATEF0}
   I_\theta(x) =\left \{ \begin{array}{ll}
    {\displaystyle x\log\Big(\frac{x}{\theta}\Big)-x+\theta}  & \text{ if } \quad x>0 \vspace{1ex}\\[0.2cm]
     \theta & \text{ if } \quad x=0 \vspace{1ex}\\[0.2cm]
     +\infty & \text{ otherwise.} 
   \end{array}  \right.
\vspace{1ex}
\end{displaymath}
For $\alpha\in(0,1)$, $K_{n}$ is no longer a sum of independent Bernoulli random variables. However, by a careful analysis of the moment-generating function of $K_{n}$, \cite{Fen(98)} proved the following theorem.

\begin{thm}[Feng-Hoppe large deviation principle]\label{T-LDP}
Under the Ewens-Pitman model with $\alpha\in(0,1)$ and $\theta>-\alpha$, the sequence ($K_n/n)$ satisfies a large deviation principle with good rate function
\begin{equation}
\label{RATEF}
   I_\alpha(x) =\sup_{ t \in \dR} \left\{xt - L_\alpha(t) \right\},
\end{equation}
where
\begin{displaymath}
\label{DEFL}
L_\alpha(t) =\left \{ \begin{array}{ll}
 -\log \bigl(1-(1-e^{-t})^{1/\alpha}\bigr) & \text{ if } \quad t>0 \vspace{1ex}\\[0.2cm]
     0 & \text{ otherwise. }  
     \end{array}  \right.
\end{displaymath}
\end{thm}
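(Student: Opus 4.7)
The natural route is the Gärtner--Ellis theorem. The plan is to prove that
\begin{equation*}
\Lambda(t) \;:=\; \lim_{n \to \infty} \frac{1}{n}\log \mathbb{E}_{\alpha,\theta}\bigl[e^{tK_n}\bigr]
\end{equation*}
exists and equals $L_\alpha(t)$ for every $t \in \mathbb{R}$, check that $L_\alpha$ is convex, lower semicontinuous and differentiable on all of $\mathbb{R}$, and then read off the LDP with good rate function $I_\alpha = L_\alpha^{\ast}$. The regime $t \leq 0$ is immediate: since $K_n \geq 1$ we have $e^{tK_n} \leq 1$, and since the well-known convergence $K_n/n^{\alpha} \Rightarrow S_{\alpha,\theta}$ forces $K_n/n \to 0$ in probability, bounded convergence yields $\Lambda(t) = 0 = L_\alpha(t)$. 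The substantive work is only for $t > 0$.

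For $t > 0$, the first step is to rewrite $\mathbb{E}_{\alpha,\theta}[e^{tK_n}]$ as an integral whose integrand features the one-parameter Mittag--Leffler function $E_\alpha(z) = \sum_{k\geq 0} z^k/\Gamma(\alpha k + 1)$. Starting from the closed form of $\mathbb{E}_{\alpha,\theta}[z^{K_n}]$ obtained by summing \eqref{epsm} over configurations with prescribed $\sum_i k_i$, one invokes the standard generating series $\sum_{n\geq k} \mathcal{C}(n,k;\alpha) z^n/n! = (1-(1-z)^{\alpha})^k/k!$ of the generalized factorial coefficients and the integral representation $(\theta)^{(n)} = \Gamma(\theta+n)/\Gamma(\theta)$ to convert the finite sum into an expression of the form
\begin{equation*}
\mathbb{E}_{\alpha,\theta}[e^{tK_n}] \;=\; C_{n,\alpha,\theta} \int_0^\infty h_n(u)\, E_\alpha\!\bigl((e^t-1)\,u\bigr)\,\mathrm{d}u
\end{equation*}
with $h_n$ an explicit algebraic kernel and $C_{n,\alpha,\theta}$ a Gamma-ratio prefactor. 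The second step exploits the classical sharp asymptotic $E_\alpha(x) = \alpha^{-1}e^{x^{1/\alpha}} + O(x^{-1})$ as $x\to +\infty$: after the rescaling $u = nv$ the integrand takes the form $e^{n\Psi_t(v)}$ for an explicit concave $\Psi_t$, the saddle-point equation $\Psi_t'(v^{\ast})=0$ has a unique interior solution, and direct evaluation gives $\Psi_t(v^{\ast}) = -\log\!\bigl(1-(1-e^{-t})^{1/\alpha}\bigr) = L_\alpha(t)$. A standard Laplace-method estimate then delivers matching upper and lower bounds, so $\Lambda(t) = L_\alpha(t)$.

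The remaining Gärtner--Ellis verifications follow from the explicit formula: convexity of $L_\alpha$ is inherited from the convexity of each prelimit; $L_\alpha(t) \sim t^{1/\alpha}$ as $t \to 0^+$ gives $C^1$ smoothness at the origin with $L_\alpha'(0) = 0$; and $L_\alpha'(t) \to 1$ as $t \to +\infty$, so $I_\alpha$ will be finite on $[0,1]$ and $+\infty$ outside, consistent with $K_n/n \in [1/n,1]$. Gärtner--Ellis then gives the LDP with good rate function $I_\alpha = L_\alpha^{\ast}$, which is exactly \eqref{RATEF}. I expect the main obstacle to be the Laplace-type step above: to make the method yield a matching lower bound at the level of $n^{-1}\log\mathbb{E}[e^{tK_n}]$, one must control the $O(x^{-1})$ correction of the Mittag--Leffler expansion and the kernel $h_n$ uniformly on a neighborhood of the saddle $v^{\ast}$, so that no contribution to the exponent at rate $n$ is lost when integrating. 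The sharpness of these controls, beyond what is required for the LDP alone, is precisely what will enable the concentration inequality announced in the abstract.
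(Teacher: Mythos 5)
Your plan is in the right spirit---G\"artner--Ellis plus sharp asymptotics of the Mittag--Leffler function is exactly the engine of the paper's proof, and your treatment of $t\le 0$ and of the G\"artner--Ellis hypotheses is fine. But two steps diverge from what actually works, and each hides a real gap.

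First, the Laplace/saddle-point machinery is both unnecessary and, as you yourself flag, incomplete: extracting a matching \emph{lower} bound from the asymptotic $E_\alpha(x)=\alpha^{-1}e^{x^{1/\alpha}}+O(x^{-1})$ requires controlling that remainder uniformly on a neighborhood of the saddle, and you leave this as an acknowledged obstacle. The paper avoids the issue entirely by using the \emph{exact} integral representation of Gorenflo et al., $E_\alpha(z)=\alpha^{-1}\exp(z^{1/\alpha})-\int_0^\infty G_\alpha(x,z)\,dx$. Starting from
$m_n(t)=\frac{1}{(n-1)!}\int_0^\infty y^{n-1}e^{-y}E_\alpha\bigl((1-e^{-t})y^\alpha\bigr)\,dy$
(note the argument is $(1-e^{-t})y^\alpha$, so that $z^{1/\alpha}=(1-e^{-t})^{1/\alpha}y$ is \emph{linear} in $y$), the leading term collapses to a Gamma integral and evaluates in closed form to $\alpha^{-1}\bigl(1-(1-e^{-t})^{1/\alpha}\bigr)^{-n}$, while the remainder $R_n(t)$ is an explicit double integral bounded above by $O(n^{-\alpha})$ via Wendel's inequality, hence subexponential. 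No rescaling, no $\Psi_t$, no saddle-point estimate: the two-sided bound is elementary, and it is sharp enough to also yield Theorem \ref{T-CI} for free.

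Second, and more seriously, your choice to work directly with general $\theta$ leads not to the one-parameter $E_\alpha$ but to the \emph{three-parameter} Mittag--Leffler function $E^{\theta_\alpha+1}_{\alpha,\theta+1}$ inside the integral (this is exactly what \eqref{SHARPMNT1THETA_int} shows). The sharp asymptotic you invoke is known only for $E_\alpha$; no analogous exact integral representation is available for $E^\gamma_{\alpha,\beta}$, and the paper explicitly notes that such a representation may not even exist for all $\theta_\alpha$. Your route would therefore stall precisely at the step where the asymptotics are needed. The paper circumvents this by first proving everything for $\theta=0$ and then transporting the result to all $\theta>-\alpha$ through the martingale change of measure $M_n(\alpha,\theta)$ relating $\mathbb{P}_{\alpha,\theta}$ to $\mathbb{P}_{\alpha,0}$, with the Gamma-ratio correction factor bounded polynomially in $n$ via Wendel's inequality (for $\theta>0$) and Gautschi's inequality (for $-\alpha<\theta<0$), hence invisible at the exponential scale. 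You should either adopt that reduction or supply a genuinely new expansion of $E^\gamma_{\alpha,\beta}$.
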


The proof of Theorem \ref{T-LDP} is lengthy and technically involved. Its core consists in showing that 
\begin{equation}\label{loglap}
\vspace{1ex}
\lim_{n\rightarrow+\infty}\frac{1}{n}\log\E_{\alpha,\theta}[\exp\{t K_{n}\}]=L_{\alpha}(t),
\end{equation}
from which Theorem \ref{T-LDP} follows using G\"artner--Ellis theorem \cite[Theorem 2.3.6]{Dem(98)}. Assuming $\theta=0$, the proof of \eqref{loglap} starts from the calculation of the moment-generating function $m_{n}(t)$ of $K_{n}$, and proceeds along three steps: i) for $\alpha$ rational, the large $n$ behaviour of $m_{n}(t)$ is derived through tedious algebra and combinatorics; ii) for non-rational $\alpha$, upper and lower large $n$ approximations of $m_{n}(t)$ are constructed by leveraging monotonicity and considering nearby rational $\alpha$; iii) a continuity argument is applied to bridge the gap between rational and non-rational values of $\alpha$. The proof of \eqref{loglap} is completed by reducing the case $\theta\neq0$ to $\theta=0$, though the argument could benefit from a greater clarity.

\subsection{Our contribution}

In this paper, we present a new approach to large deviations for $K_{n}$, offering an alternative and concise proof of Theorem \ref{T-LDP}. Assuming $\theta=0$, the novelty of our approach lies in the use of an integral representation of $m_{n}(t)$ in terms of the (one-parameter) Mittag-Leffler function \cite[Chapter 3]{Gor(14)}. Then, by leveraging a well-known integral representation of the Mittag-Leffler function \cite{Gor(02)}, we obtain a sharp expansion of $m_{n}(t)$, which leads to \eqref{loglap} through straightforward algebra. In doing this, we also clarify and make rigorous how the case $\theta\neq0$ reduces to the case $\theta=0$. This approach significantly simplifies the original proof Theorem \ref{T-LDP}, as it allows to avoid the technical difficulties arising from the continuity argument with respect to rational and non-rational values of $\alpha$. 

Beyond large deviations for $K_{n}$, our approach allows to establish a sharp concentration inequality for $K_n$ involving the rate function $I_\alpha$ in \eqref{RATEF} as stated in the next theorem.

\begin{thm}\label{T-CI}
Under the Ewens-Pitman model with $\alpha\in(0,1)$ and $\theta>-\alpha$, for any $x \in(0,1)$ and all $n \geq 1$,
\begin{equation}
\label{CI}
   \dP_{\alpha,\theta} \left( \frac{K_n}{n} \geq x \right) \leq \frac{ P_n( \alpha, \theta) c_n( \alpha, \theta)}{\alpha} \exp\left(\! -n I_\alpha(x)\right),
\end{equation}
where
\begin{displaymath}
P_n( \alpha, \theta)=\left \{ \begin{array}{ll}
    {\displaystyle \big(\lfloor \theta_\alpha \rfloor + n \big)^{\theta_\alpha}}  & \text{ if } \quad \theta>0 \vspace{1ex}\\[0.2cm]
    1 & \text{ if } \quad \theta \leq 0
   \end{array}  \right.
\end{displaymath}
and
\begin{equation}
\label{DEFCN}
c_n( \alpha, \theta)= 
\frac{\Gamma(n)\Gamma(\theta +1)}{\Gamma\left(\theta_\alpha+1\right)\Gamma(n+\theta)},
\end{equation}
with $\theta_{\alpha}=\theta/\alpha$ and $\lfloor \theta_\alpha \rfloor$ the integer part of $\theta_\alpha$. In particular, if $\theta=0$ then $P_n( \alpha, \theta) c_n( \alpha, \theta)=1$.
\end{thm}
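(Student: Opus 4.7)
The proof is a Chernoff--Markov argument built on top of the Mittag--Leffler estimate that the paper uses for the large deviation proof. For $t>0$, Markov's inequality gives
$$\mathbb{P}_{\alpha,\theta}(K_n/n \geq x) \leq e^{-ntx}\,\mathbb{E}_{\alpha,\theta}[e^{tK_n}],$$
so the target inequality will follow from a non-asymptotic upper bound of the form
$$\mathbb{E}_{\alpha,\theta}[e^{tK_n}] \leq \frac{P_n(\alpha,\theta)\,c_n(\alpha,\theta)}{\alpha}\,e^{n L_\alpha(t)},\qquad t>0,\ n\geq 1,$$
followed by the Fenchel--Legendre optimisation that defines $I_\alpha$ in \eqref{RATEF}.

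The first ingredient is the reduction from a general $\theta>-\alpha$ to $\theta=0$. Summing the joint law \eqref{epsm} over all compositions with exactly $k$ blocks, rewriting the rising factorials as ratios of Gamma functions, and taking the limit $\theta'\to 0$ for the reference measure, one obtains the explicit Radon--Nikodym formula
$$\frac{\mathbb{P}_{\alpha,\theta}(K_n = k)}{\mathbb{P}_{\alpha,0}(K_n = k)} = c_n(\alpha,\theta)\,\frac{\Gamma(\theta_\alpha + k)}{\Gamma(k)},\qquad k=1,\ldots,n,$$
where $c_n(\alpha,\theta)$ is exactly \eqref{DEFCN}. Since $K_n\leq n$, one bounds $\Gamma(\theta_\alpha+k)/\Gamma(k)$ by $P_n(\alpha,\theta)$: for $\theta>0$, writing $\theta_\alpha=\lfloor\theta_\alpha\rfloor+\{\theta_\alpha\}$ and combining Wendel's inequality $\Gamma(x+s)/\Gamma(x)\leq x^s$ with the telescoping integer-shift product yields the factor $(\lfloor\theta_\alpha\rfloor+n)^{\theta_\alpha}$; for $\theta\leq 0$, monotonicity properties of $k\mapsto\Gamma(\theta_\alpha+k)/\Gamma(k)$ on the relevant range of $k$ allow one to take $P_n=1$. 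This gives $\mathbb{E}_{\alpha,\theta}[e^{tK_n}]\leq P_n\,c_n\,\mathbb{E}_{\alpha,0}[e^{tK_n}]$.

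The second and crucial ingredient is the non-asymptotic estimate for $\theta=0$: one must show
$$m_n(t) := \mathbb{E}_{\alpha,0}[e^{tK_n}] \leq \frac{1}{\alpha}\bigl(1-(1-e^{-t})^{1/\alpha}\bigr)^{-n} = \frac{1}{\alpha}e^{n L_\alpha(t)},\qquad t>0.$$
The discussion in Section 1 already produces an integral representation of $m_n(t)$ through the one-parameter Mittag--Leffler function $E_\alpha$ together with a sharp large-$n$ expansion of it. I would upgrade that asymptotic expansion into a one-sided, uniform-in-$n$ bound: the Hankel contour representation of $E_\alpha$ splits $m_n(t)$ into a dominant term of the form $e^{nL_\alpha(t)}$ coming from the singularity of the integrand on the positive real axis plus a controlled remainder, and the constant $1/\alpha$ is the residue at that dominant singularity.

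Combining the two ingredients with Markov,
$$\mathbb{P}_{\alpha,\theta}(K_n/n\geq x) \leq \frac{P_n\,c_n}{\alpha}\,e^{-n(xt - L_\alpha(t))}\qquad (t>0),$$
and taking the infimum in $t>0$ produces $e^{-n I_\alpha(x)}$. The restriction to $t>0$ is harmless because $L_\alpha(t)=0$ for $t\leq 0$ and, for $x\in(0,1)$, the supremum is attained at an interior point $t^\star>0$: indeed $L_\alpha$ is smooth and convex on $(0,\infty)$ with $L_\alpha'(0^+)=0$ and $L_\alpha'(t)\to 1$ as $t\to\infty$, so $L_\alpha'(t^\star)=x$ has a unique solution. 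The main technical obstacle is the second ingredient: turning the asymptotic Mittag--Leffler expansion into a clean non-asymptotic upper bound with prefactor exactly $1/\alpha$; once that estimate is in place, the rest of the proof is routine bookkeeping on Gamma-function ratios and a one-dimensional convex optimisation.
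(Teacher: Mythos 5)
Your proposal follows essentially the same route as the paper: Markov's inequality, the change-of-measure reduction to $\theta=0$ via the ratio $\Gamma(\theta_\alpha+K_n)/\Gamma(K_n)$, the Wendel/Gautschi bounds to produce $P_n(\alpha,\theta)$, and the Mittag--Leffler representation to bound $m_n(t)$. One clarification: what you describe as ``the main technical obstacle''---upgrading the asymptotic expansion into a one-sided, uniform-in-$n$ bound---is not actually an obstacle, because Lemma~\ref{L-DEVMNT} already delivers the \emph{exact} identity $m_n(t)=\frac{1}{\alpha}\bigl(1-(1-e^{-t})^{1/\alpha}\bigr)^{-n}-R_n(t)$, and the remainder $R_n(t)$ in \eqref{RNT} is automatically nonnegative since the kernel $G_\alpha(x,y)$ of \eqref{DEFGalpha} is a product of nonnegative factors for $x,y>0$ and $\alpha\in(0,1)$ (the denominator $x^2-2xy\cos(\pi\alpha)+y^2\geq y^2\sin^2(\pi\alpha)>0$). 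So the non-asymptotic upper bound with prefactor exactly $1/\alpha$ falls out by simply discarding $-R_n(t)$, with no need to estimate its size; the estimate \eqref{RNTNUB} is only needed for the matching lower bound in the proof of Theorem~\ref{T-LDP}.
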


\subsection{Organization of the paper}

The paper is structured as follows. In Section \ref{sec1} we present an alternative derivation of the moment-generating function of $K_{n}$, through the sequential construction of the Ewens-Pitman model, as well as a sharp expansion of it. Such an expansion is the key ingredient to obtain an alternative and concise proof of Theorem \ref{T-LDP} in Section \ref{sec2}, and to prove Theorem \ref{T-CI} in Section  \ref{sec3}. In Section \ref{sec4} we conclude by discussing some directions for future work.


\section{On the moment-generating function of $K_{n}$}\label{sec1}

For $\alpha\in(0,1)$ and $\theta>-\alpha$, let $\mathbb{P}_{\alpha,\theta}$ be the law of the Pitman-Yor random measure, which induces the Ewens-Pitman model \eqref{epsm}, and let $\mathbb{E}_{\alpha,\theta}$ be the corresponding expectation. For an integer $n\geqslant 1$, denote by $m_n$ the moment-generating function of $K_n$ defined, for all $t \in \dR$, by
\begin{equation*}
m_n(t)=\dE_{\alpha,\theta}[\exp(t K_n)].
\end{equation*}
From a change of measure formula that links $\mathbb{P}_{\alpha,\theta}$, with $\theta>-\alpha$, and $\mathbb{P}_{\alpha,0}$ \cite[Section 3]{Pit(97)}, denote
\begin{equation} \label{MARTPITMAN}
M_n(\alpha, \theta)= 
\frac{a_n \Gamma\left(\theta_\alpha+K_n\right)}{\Gamma\left(\theta_\alpha+1\right) \Gamma(K_n)},
\end{equation}
where $\theta_{\alpha}=\theta/\alpha$ and
\begin{displaymath}
a_n=\prod_{k=1}^{n-1} \left(\frac{k}{k+\theta}\right)=\frac{\Gamma(n)\Gamma(\theta +1)}{\Gamma(n+\theta)}.
\end{displaymath}
It is not difficult to see that the sequence $(M_n(\alpha, \theta))$ is a positive $\mathbb{P}_{\alpha,0}$-martingale with mean value $\dE_{\alpha,0}[M_n(\alpha, \theta)]=1$. Consequently, we deduce directly from \eqref{MARTPITMAN} that for all $t \in \dR$,
\begin{equation}\label{mgf_general}
m_n(t)=\dE_{\alpha,\theta}[\exp(t K_n)]=\dE_{\alpha,0}[\exp(t K_n)M_n(\alpha, \theta)]=c_n(\alpha,\theta) I_{n}(t),
\end{equation}
where $c_n(\alpha,\theta)$ is given in \eqref{DEFCN} and
\begin{displaymath}
I_n(t)=\dE_{\alpha,0}\left[\exp(t K_n)\frac{\Gamma\left(\theta_\alpha+K_n\right)}{\Gamma(K_n)}\right].
\end{displaymath}
One can observe that the moment-generating $m_n(t)$ is finite for all $t \in \dR$ and for all $n \geq 1$, since $K_n\leq n$. 

\subsection{The case $\theta=0$}

For $\alpha\in(0,1)$ and $\theta=0$, \cite[Equation (3.5)]{Fen(98)} provides an expression for $m_{n}(t)$ in terms of the rising factorial moments of $K_{n}$ \cite[Chapter 3]{Pit(06)}. In particular for all $n\geq1$ and all $t>0$,
\begin{equation}\label{SHARPMNT1}
m_{n}(t)= \frac{1}{(n-1)!} \sum_{\ell=0}^\infty \left(\frac{e^t -1}{e^t}\right)^\ell \frac{\Gamma(n+\alpha \ell)}{\Gamma( \alpha \ell+1)}.
\end{equation}
We propose an alternative proof of \eqref{SHARPMNT1}, which is of independent interest. Let $F$ be the generating function of the sequence $(m_{n}(t))$, which is given, for all $t \in \dR$
and for all $z \in \dC$, by
\begin{equation}\label{DEFGF}
F(t,z)=\sum_{n=1}^\infty m_n(t) z^{n-1}.
\end{equation}
The radius of convergence of $F$, denoted $R^F(t)$, should depend on $t$ and it is positive since, for all $t \in \dR$, $|m_n(t)| \leq e^{n |t|}$. The next lemma provides an expression for $F$ by relying on the definition of $K_{n}$ in terms of the sequential construction of the Ewens-Pitman model \cite{Pit(95),Fen(98)}.

\begin{lem}\label{L-GF} 
For $\alpha\in(0,1)$ and $\theta=0$, we have for all $t\in \dR$,
\begin{equation}\label{RF}
R^F(t) =  1+ \left|\frac{e^t-1}{e^t}\right|^{1/\alpha}.
\end{equation}
Moreover, for all $t \in \dR$ and for all $z \in \dC$ such that $|z| < R^F(t)$,
\begin{equation}\label{SOLF}
F(t,z)=\frac{1}{1-z} \left(1- \frac{(e^t -1)}{(e^t -1)-e^t(1-z)^{\alpha}}\right).
\end{equation}
\end{lem}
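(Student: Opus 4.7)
My plan is to derive the recursion for $m_n(t)$ from the Chinese restaurant process construction of the Ewens-Pitman model (with $\theta=0$), convert this into a first-order linear PDE for $F(t,z)$, solve the PDE by the method of characteristics to obtain \eqref{SOLF}, and then extract \eqref{RF} from the singularity structure of the resulting closed form. Under the Chinese restaurant process, $K_1=1$ almost surely and, conditionally on $K_n$, the increment $K_{n+1}-K_n$ is Bernoulli with parameter $\alpha K_n/n$. Conditioning on $K_n$ and using $\dE_{\alpha,0}[K_n e^{tK_n}]=m_n'(t)$ gives
\begin{equation*}
n\, m_{n+1}(t) = n\, m_n(t) + \alpha(e^t-1)\, m_n'(t),\qquad m_1(t)=e^t.
\end{equation*}

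Next, I would multiply by $z^{n-1}$ and sum over $n\geq 1$, using $\sum_n n\, m_{n+1}(t) z^{n-1}=\partial_z F$, $\sum_n n\, m_n(t) z^{n-1}=F+z\partial_z F$ and $\sum_n m_n'(t) z^{n-1}=\partial_t F$, to convert the recursion into the first-order linear PDE
\begin{equation*}
(1-z)\partial_z F(t,z) - \alpha(e^t-1)\partial_t F(t,z) = F(t,z),\qquad F(t,0)=e^t.
\end{equation*}
The crude bound $m_n(t)\le e^{n|t|}$ (from $K_n\le n$) gives $F(t,\cdot)$ a positive radius of convergence for each fixed $t$, so these formal manipulations are legitimate.

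I would then solve the PDE by the method of characteristics. The characteristic system $\tfrac{dz}{1-z}=-\tfrac{dt}{\alpha(e^t-1)}=\tfrac{dF}{F}$ integrates (using $\int\tfrac{dt}{e^t-1}=\log(1-e^{-t})$) to the two invariants $(1-e^{-t})^{1/\alpha}/(1-z)$ and $F\cdot(1-z)$, so the general solution has the form $F(t,z)(1-z)=\phi\bigl((1-e^{-t})^{1/\alpha}/(1-z)\bigr)$ for some $\phi$. The initial condition $F(t,0)=e^t$ forces $\phi(c)=1/(1-c^\alpha)$, and substituting produces \eqref{SOLF}. The radius \eqref{RF} then follows by locating the nearest singularity of the closed form to $z=0$: on the branch of $(1-z)^\alpha$ appropriate to the power series at $z=0$, this is governed by the zero of the denominator $e^t(1-z)^\alpha-(e^t-1)$, whose position is an $\alpha$-th root of $(e^t-1)/e^t$. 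The main delicate point I anticipate is tracking the correct branch of this $\alpha$-th root when extracting the modulus in \eqref{RF}; the remainder is elementary.
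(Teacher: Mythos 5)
Your plan reproduces the paper's proof essentially step for step: the Chinese restaurant recursion $n\,m_{n+1}(t)=n\,m_n(t)+\alpha(e^t-1)m_n'(t)$, the resulting first-order PDE \eqref{DIFEQF}, the method of characteristics with the same two first integrals $F(1-z)$ and $(1-e^{-t})^{1/\alpha}/(1-z)$, and the initial condition $F(t,0)=e^t$ to pin down the free function; the paper's use of intermediate constants $C_1,C_2$ and an auxiliary function $f$ is just a different way of organizing the same computation.

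One concrete warning about the last step, which you flag as delicate and which in fact does not go as you expect: locating the nearest singularity of \eqref{SOLF} to $z=0$ does \emph{not} yield $1+|(e^t-1)/e^t|^{1/\alpha}$. For $t>0$ the principal-branch zero of the denominator $e^t(1-z)^\alpha-(e^t-1)$ sits at $z=1-(1-e^{-t})^{1/\alpha}$, which is inside the unit disk, at distance $1-|(e^t-1)/e^t|^{1/\alpha}$ from the origin. This is consistent with the later expansion \eqref{MNT}, which gives $m_n(t)\sim\alpha^{-1}d(t)^{-n}$ with $d(t)=1-(1-e^{-t})^{1/\alpha}$, hence $R^F(t)=d(t)<1$ for $t>0$ (and it also matches the crude lower bound $m_n(t)\geq e^{tn}\alpha^{n-1}$ from $\dP_{\alpha,0}(K_n=n)=\alpha^{n-1}$). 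So the ``$+$'' in \eqref{RF} appears to be a sign error in the statement itself; had you carried out the branch analysis you intended, you would have obtained a ``$-$''. None of the downstream uses of the lemma depend on this, since the paper only works with $|z|<1<|\xi(t,z)|$, but your proof should not claim to establish \eqref{RF} as written.
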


\begin{proof}
It follows from \cite[Proposition 8]{Pit(95)}, for all $n \geq 1$, $K_{n+1} = K_n + \xi_{n+1}$ where 
the conditional distribution $\cL(\xi_{n+1}|\cF_n)$ is the Bernoulli $\cB(p_n)$ distribution with $p_{n}=\alpha K_{n}/n$. Consequently, for all $t \in \dR$ and for all $n \geq 1$, 
\begin{align*}
m_{n+1} (t) = \dE_{\alpha,0}\left[\exp(t K_{n+1})\right]&=\dE_{\alpha,0}\left[\exp(t K_n)\dE_{\alpha,0}[\exp(t\xi_{n+1})|\cF_n]\right]\\
& = \dE_{\alpha,0}\left[\exp(t K_n) p_n e^t+\exp(t K_n)(1-p_n)\right]\\
& =   m_n(t)+ (e^t -1) \dE_{\alpha,0}\left[p_n\exp(t K_n)\right].
\end{align*}
As $p_n= \alpha K_n/n$, we have $\dE_{\alpha,0}\left[p_n\exp(t K_n)\right]=\alpha m_n^\prime(t)/n$, 
which implies that for all $t \in \dR$ and for all $n\geq 1$,
\begin{equation}
\label{LAPLACE1}
m_{n+1}(t)=m_n(t)+\frac{\alpha}{n} (e^t -1) m_n^\prime(t).
\end{equation}
From \eqref{DEFGF} and \eqref{LAPLACE1}, we obtain for all $|z|<R^F(t)$
\begin{align*}
\frac{\partial F(t,z)}{\partial z} = \sum_{n=2}^\infty (n-1) m_n(t) z^{n-2}&=\sum_{n=1}^\infty 
n m_{n+1}(t) z^{n-1} \\
& =  \sum_{n=1}^\infty nm_n(t) z^{n-1} + \sum_{n=1}^\infty \alpha(e^t-1)m_n^\prime(t)z^{n-1} \\
& =   F(t,z)+ z\frac{\partial F(t,z)}{\partial z} + \alpha(e^t-1)\frac{\partial F(t,z)}{\partial t},  
\end{align*}
which leads to
\begin{equation}
\label{DIFEQF}
(1- z)\frac{\partial F(t,z)}{\partial z} + \alpha(1-e^t) \frac{\partial F(t,z)}{\partial t} 
=  F(t,z)
\end{equation}
with two initial values
\begin{equation}
\label{INITVAL}
F(t,0)=e^t \hspace{1cm}\text{and}\hspace{1cm}F(0,z)=\frac{1}{1-z}.
\end{equation}
Now, the classical method of characteristics allows to solve the first-order partial differential equation \eqref{DIFEQF}. We can associate \eqref{DIFEQF} to
the ordinary differential system given by
\begin{displaymath}
    \frac{dz}{1-  z} = \frac{dt}{\alpha(1-e^t)} = \frac{dw}{w},
\end{displaymath}
where $w$ stands for $F$. We assume in the sequel that $t>0$, inasmuch as the proof for $t<0$ follows exactly the same lines. On the one hand, the equation binding $w$ and $t$ is such that
\begin{equation}\label{ODIFSYST2}
    \frac{dw}{w} = \frac{dt}{\alpha(1-e^t)}.
\end{equation}
From \eqref{ODIFSYST2}, the solution is
\begin{equation} \label{EQC1}
w = C_1 \left(\frac{e^t}{e^{t}-1}\right)^{1/\alpha}.
\end{equation}
On the other hand, the equation binding $z$ and $t$ is similar but inhomogeneous, that is we write
\begin{equation}\label{ODIFSYST3}
\frac{dz}{dt} = \frac{1}{\alpha(e^t-1)} z - \frac{1}{\alpha(e^t-1)}.
   \end{equation}
We obtain from \eqref{ODIFSYST3}, by means of  the variation of constant method, that the solution satisfies
\begin{equation}
\label{EQC2}
z =1+C_2\left(\frac{e^t-1}{e^{t}}\right)^{1/\alpha}.
\end{equation}
By coupling \eqref{EQC1} and \eqref{EQC2},
\begin{equation}\label{EQPSI}
 C_1 = f(-C_2),
\end{equation}
where $f$ remains to be explicitly calculated using the boundary values \eqref{INITVAL}.
We get from \eqref{EQC1} that
\begin{displaymath}
C_1=F(t,z)\left(\frac{e^t-1}{e^{t}}\right)^{1/\alpha}.
\end{displaymath}
Hence, we deduce 
from \eqref{EQC1}, \eqref{EQC2} and \eqref{EQPSI} that
for all $t>0$ and for all $z \in \dC$ such that $|z|<R^F(t)$,
\begin{equation}
\label{EQFPSI}
F(t,z)=\left(\frac{e^t}{e^{t}-1}\right)^{1/\alpha}f\left(\left(\frac{e^t}{e^{t}-1}\right)^{1/\alpha}(1-z)\right)=c(t)f(c(t)(1-z)),
\end{equation}
where
\begin{displaymath}
c(t)=\left(\frac{e^t}{e^{t}-1}\right)^{1/\alpha}.
\end{displaymath}
We already saw from \eqref{INITVAL} that for $z=0$, $F(t,0)=e^t$, which implies from \eqref{EQFPSI} with the special value $z=0$ that $e^t=c(t)f(c(t))$. Hence, we find from
the change of variable $s=c(t)$ that for all $s>1$,
\begin{equation}
\label{DEFf}
f(s) = \frac{s^\alpha}{s(s^\alpha-1)}.
\end{equation}
Finally, it follows from \eqref{EQFPSI} and \eqref{DEFf} that
for all $t>0$ and for all $z \in \dC$ such that $|z|<R^F(t)$,
\begin{displaymath}
F(t,z)=\frac{1}{1-z}\left(\frac{\left(c(t) (1-z) \right)^\alpha}{\left(c(t) (1-z) \right)^\alpha-1}\right)= \frac{1}{1-z}\left(1- \frac{1}{1-\left(c(t) (1-z) \right)^\alpha}\right),
\end{displaymath}
leading to \eqref{SOLF}. 
The radius of convergence \eqref{RF} comes from \eqref{SOLF}, which completes the proof.
\end{proof}

From now on, the calculation of \eqref{SHARPMNT1} follows from a direct application of Lemma \ref{L-GF}. In particular, consider the function $\cF$ defined, for all $t >0$ and for all $z \in \dC$ such that $|z| < R^F(t)$, by
\begin{equation}\label{DEFCF}
\cF(t,z)= (1-z) F(t,z)=\varphi( \xi(t,z)),
\end{equation}
where
\begin{displaymath}
\xi(t,z)= c(t)(1-z)
\end{displaymath}
and
\begin{displaymath}
\varphi(z) = \frac{z^\alpha}{z^\alpha-1}=\frac{1}{1-z^{-\alpha}}.
\end{displaymath}
By choosing $z\in \dC$ such that $|z|<1<|\xi(t,z)|$, we obtain from Newton's generalized Binomial theorem
\begin{align*}
\cF(t,z) =\sum_{k=0}^\infty \left(\xi(t,z) \right)^{-\alpha k} &=\sum_{k=0}^\infty \left(\frac{e^t -1}{e^t}\right)^k(1-z)^{-\alpha k} \\
&=\sum_{k=0}^\infty \left(\frac{e^t -1}{e^t}\right)^k \sum_{n=0}^\infty \frac{(\alpha k)^{(n)}}{n!} z^n \\
&= \sum_{n=0}^\infty \frac{1}{n!} \sum_{k=0}^\infty \left(\frac{e^t -1}{e^t}\right)^k (\alpha k)^{(n)} z^n\\
&= \sum_{n=0}^\infty m_n^{\cF}(t) z^n,
\end{align*}
where for all $n \geq 0$,
\begin{equation}
\label{MNCALFT}
m_n^{\cF}(t)= \frac{1}{n!} \sum_{k=0}^\infty \left(\frac{e^t -1}{e^t}\right)^k (\alpha k)^{(n)}.
\end{equation}
Moreover, we have from \eqref{DEFGF} together with the application of \eqref{DEFCF} that $m_1(t)= e^t$ and for all $n \geq 1$, $m_{n+1}(t)=m_n(t)+m_n^{\cF}(t)$. Accordingly, we deduce from \eqref{MNCALFT}, for all $n \geq 1$, that
\begin{align*}
m_{n+1}(t) =e^t +\sum_{k=1}^n  m_k^{\cF}(t) &=e^t  +\sum_{k=1}^n \frac{1}{k!} \sum_{\ell=0}^\infty \left(\frac{e^t -1}{e^t}\right)^\ell (\alpha \ell)^{(k)} \\
&=e^t  +\sum_{\ell=0}^\infty \left(\frac{e^t -1}{e^t}\right)^\ell  \sum_{k=1}^n \frac{(\alpha \ell)^{(k)} }{k!}\\
& = e^t  +\sum_{\ell=0}^\infty \left(\frac{e^t -1}{e^t}\right)^\ell\left( \frac{1}{n!}(\alpha \ell +1)^{(n)} -1 \right) \\
&= \frac{1}{n!} \sum_{\ell=0}^\infty \left(\frac{e^t -1}{e^t}\right)^\ell (\alpha \ell +1)^{(n)} \\
&= \frac{1}{n!} \sum_{\ell=0}^\infty \left(\frac{e^t -1}{e^t}\right)^\ell 
\frac{\Gamma(n+\alpha \ell +1)}{\Gamma( \alpha \ell+1)},
\end{align*}
which coincides with the expression \eqref{SHARPMNT1}. As we will discuss below, similar arguments extend to the case $\theta\neq0$, for which no explicit expression of $m_{n}(t)$ is available in the literature.

The next lemma provides a sharp expansion of $m_{n}(t)$ in \eqref{SHARPMNT1}. This is the key ingredient to provide an alternative and concise proof of Theorem \ref{T-LDP} and to prove Theorem \ref{T-CI}.

\begin{lem}\label{L-DEVMNT}
For $\alpha\in(0,1)$ and $\theta=0$, we have for all $t>0$ and for all $n \geq 2$,
\begin{equation}\label{MNT}
m_n(t)=\frac{1}{\alpha} 
\left( 1 - \left(\frac{e^t -1}{e^t}\right)^{1/\alpha}\right)^{-n} -R_n(t),
\end{equation}
where
\begin{equation}\label{RNT}
R_n(t)=\frac{1}{(n-1)!}\int_0^{+\infty} y^{n-1} e^{-y} \int_0^{+\infty} G_\alpha(x, (1-e^{-t}) y^\alpha) dx dy
\end{equation}
with
\begin{equation}\label{DEFGalpha}
G_\alpha(x,y) = \frac{1}{\pi \alpha} \exp(-x^{1/\alpha}) \left(\frac{y \sin(\pi \alpha)}{x^2-2xy \cos(\pi \alpha)+y^2}\right).
\end{equation}
\end{lem}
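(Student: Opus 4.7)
The plan is to derive a tractable integral representation for $m_n(t)$ by combining \eqref{SHARPMNT1} with the one-parameter Mittag-Leffler function, and then to apply its Gorenflo-Mainardi integral representation \cite{Gor(02)}.

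First, I would substitute the Euler integral $\Gamma(n+\alpha\ell)=\int_0^\infty y^{n+\alpha\ell-1}e^{-y}\,dy$ into \eqref{SHARPMNT1}. Since $t>0$, every summand is nonnegative, so Tonelli's theorem lets me interchange the sum and the integral, yielding
\begin{equation*}
m_n(t)=\frac{1}{(n-1)!}\int_0^\infty y^{n-1}e^{-y}\sum_{\ell=0}^{\infty}\frac{\left((1-e^{-t})y^\alpha\right)^\ell}{\Gamma(\alpha\ell+1)}\,dy=\frac{1}{(n-1)!}\int_0^\infty y^{n-1}e^{-y}E_\alpha\!\left((1-e^{-t})y^\alpha\right)dy,
\end{equation*}
where $E_\alpha$ denotes the one-parameter Mittag-Leffler function.

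Next, I would invoke the Gorenflo-Mainardi integral representation of $E_\alpha$, valid for $\alpha\in(0,1)$ and $y\geq 0$, namely
\begin{equation*}
E_\alpha(y)=\frac{1}{\alpha}\exp(y^{1/\alpha})-\int_0^\infty G_\alpha(x,y)\,dx,
\end{equation*}
with $G_\alpha$ as in \eqref{DEFGalpha}. Substituting this into the previous display splits $m_n(t)$ into two contributions. For the first contribution, I would compute
\begin{equation*}
\frac{1}{\alpha(n-1)!}\int_0^\infty y^{n-1}\exp\!\left(-\left(1-(1-e^{-t})^{1/\alpha}\right)y\right)dy=\frac{1}{\alpha}\left(1-(1-e^{-t})^{1/\alpha}\right)^{-n},
\end{equation*}
using the elementary Gamma integral, where convergence is guaranteed by $(1-e^{-t})^{1/\alpha}<1$ for $t>0$ and $\alpha\in(0,1)$; rewriting $(1-e^{-t})^{1/\alpha}$ as $((e^t-1)/e^t)^{1/\alpha}$ recovers the leading term of \eqref{MNT}. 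The second contribution equals $-R_n(t)$ from \eqref{RNT}, after one more Tonelli swap, which is legitimate since $G_\alpha\geq 0$ on $(0,\infty)\times(0,\infty)$.

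The main subtlety is bookkeeping rather than algebra: one must check the hypotheses of the Mittag-Leffler integral representation and justify the Fubini-Tonelli interchanges. Nonnegativity of $G_\alpha$ follows from $\sin(\pi\alpha)>0$ together with $x^2-2xy\cos(\pi\alpha)+y^2=(x-y\cos(\pi\alpha))^2+y^2\sin^2(\pi\alpha)>0$ for $x,y>0$; nonnegativity of the original series is immediate from $t>0$. Once these positivity and integrability points are in place, formula \eqref{MNT} is obtained by direct identification of terms.
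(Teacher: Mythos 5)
Your proposal is correct and follows essentially the same route as the paper: insert the Euler integral for $\Gamma(n+\alpha\ell)$ into \eqref{SHARPMNT1}, recognize the one-parameter Mittag-Leffler function, apply the Gorenflo integral representation from \cite[Theorem 2.3]{Gor(02)}, and evaluate the elementary Gamma integral for the leading term. Your added remarks on Tonelli and the nonnegativity of $G_\alpha$ are sound justifications that the paper leaves implicit, but they do not change the argument.
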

\begin{proof}
It follows from \eqref{SHARPMNT1}, by applying the integral representation of $\Gamma(n+\alpha\ell)$, that for all $n\geq1$ and all $t>0$,
\begin{align}
m_{n}(t)&=\frac{1}{(n-1)!} \sum_{\ell=0}^\infty \frac{(1-e^{-t})^\ell}{\Gamma( \alpha \ell+1)}
\int_0^\infty y^{n+\alpha \ell-1} e^{-y} dy \nonumber \\
&=\frac{1}{(n-1)!} \int_0^\infty y^{n-1} e^{-y} E_\alpha\left( (1-e^{-t})y^\alpha\right) dy,
\label{SHARPMNT2}
\end{align}
where $E_\alpha$ stands for the Mittag-Leffler function \cite[Chapter 3]{Gor(14)}, which is defined, for all $z \in \dC$, by 
$$
E_\alpha(z)=\sum_{\ell=0}^\infty\frac{z^\ell}{\Gamma( \alpha \ell+1)}.
$$  
We deduce from the integral representation of the Mittag-Leffler function $E_\alpha$ given by \cite[Theorem 2.3]{Gor(02)} that for all $z \in \dC$ such that $|\arg(z)|< \pi \alpha$,
\begin{equation}
\label{INTREalpha}
E_\alpha(z)=\frac{1}{\alpha} \exp(z^{1/\alpha}) - \int_0^\infty G_\alpha(x,z) dx,
\end{equation}
where the function $G_\alpha$ is defined in \eqref{DEFGalpha}. Consequently,
we obtain from \eqref{SHARPMNT2} and \eqref{INTREalpha} that
\begin{equation}
\label{SHARPMNT3} 
m_{n}(t)= \frac{1}{\alpha (n-1)!} \int_0^\infty y^{n-1} e^{-y} \exp((1-e^{-t})^{1/\alpha}y)dy -R_n(t),
\end{equation}
where the remainder $R_n(t)$ is given by \eqref{RNT}. In particular, the first term in the right-hand side of \eqref{SHARPMNT3} can be easily calculated. As a matter of fact, by a simple change of variables
\begin{align}
\int_0^\infty \!\! y^{n-1} e^{-y} \exp((1-e^{-t})^{1/\alpha}y)dy 
&=\int_0^\infty \!\! y^{n-1}  \exp(-y(1-(1-e^{-t})^{1/\alpha}))dy \nonumber \\
&=\left( 1 - \left(\frac{e^t -1}{e^t}\right)^{1/\alpha}\right)^{-n} \int_0^\infty \!\!x^{n-1} \exp(-x)dx \nonumber\\
&=(n-1)! \left( 1 - \left(\frac{e^t -1}{e^t}\right)^{1/\alpha}\right)^{-n}.
\label{SHARPMNT4} 
\end{align}
Finally, the compact expression \eqref{MNT} follows from \eqref{SHARPMNT3} and \eqref{SHARPMNT4}, which completes the proof.
\end{proof}

\subsection{The case $\theta\neq0$}

For $\alpha\in(0,1)$ and $\theta>-\alpha$, an extension of Lemma \ref{L-GF} is possible by exploiting the change of measure formula \eqref{mgf_general}. In particular, it allows us to prove that for all $n\geq1$ and for all $t>0$,
\begin{equation}\label{SHARPMNT1THETA}
m_{n}(t)= \frac{\Gamma(\theta+1)e^{-t\theta_\alpha}}{\Gamma(n+\theta)}   
 \sum_{\ell=0}^\infty \frac{1}{\ell!} 
\big(\theta_\alpha+1\big)^{(\ell)} 
\Big(\frac{e^t -1}{e^t}\Big)^\ell  
 \frac{\Gamma(n+\theta+ \alpha \ell)}{\Gamma( \theta+\alpha \ell+1)}.
\end{equation}
The expression \eqref{SHARPMNT1THETA} is new, and we recover \eqref{SHARPMNT1} from \eqref{SHARPMNT1THETA} by setting $\theta=\theta_\alpha=0$. From \eqref{SHARPMNT1THETA}, by applying the integral representations of $\Gamma(n+\theta+\alpha\ell)$, we obtain that for all $n\geq1$ and all $t>0$
\begin{align}\label{SHARPMNT1THETA_int}
\notag m_{n}(t)&=\frac{\Gamma(\theta+1)\text{e}^{-t\theta_{\alpha}}}{\Gamma(n+\theta)}\sum_{\ell=0}^{\infty}\frac{(1-e^{-t})^{\ell}}{\ell!\Gamma(\theta+\alpha\ell+1)}(\theta_{\alpha}+1)^{(\ell)}\int_{0}^{+\infty}y^{n+\theta+\alpha\ell-1}e^{-y}\ddr y\\
&=\frac{\Gamma(\theta+1)\text{e}^{-t\theta_{\alpha}}}{\Gamma(n+\theta)}\int_{0}^{+\infty}y^{n+\theta-1}e^{-y}E^{\theta_{\alpha}+1}_{\alpha,\theta+1}((1-e^{-t})y^{\alpha})\ddr y,
\end{align}
where $E^{\gamma}_{\alpha,\beta}$ stands for the three-parameter Mittag-Leffler function \cite[Chapter 5]{Gor(14)} defined, for all $z\in\mathbb{C}$, by
\begin{displaymath}
E^{\gamma}_{\alpha,\beta}(z)=\sum_{\ell=0}^{\infty}\frac{(\gamma)^{(l)}z^{\ell}}{\ell!\Gamma(\alpha\ell+\beta)}.
\end{displaymath}
This is a generalization of the Mittag-Leffler function, which is recovered by setting  $\gamma=\beta=1$. The expression \eqref{SHARPMNT1THETA_int} is new, and we find again \eqref{SHARPMNT2} from \eqref{SHARPMNT1THETA_int} by setting $\theta=\theta_\alpha=0$.
\ \vspace{1ex}\\
To the best of our knowledge, no integral representation of the three-parameter Mittag-Leffler function, analogous of that in \eqref{INTREalpha} for the Mittag-Leffler function, is available in the literature. In particular, the proof of \cite[Theorem 2.3]{Gor(02)}, in combination with \cite[Equation 3]{Mai(15)}, suggests that such an integral representation may not exists for all values of $\theta_{\alpha}$.


\section{A concise proof of Theorem \ref{T-LDP}}\label{sec2}

First of all, we focus our attention on the special case $\theta=0$. In particular, it follows from the elementary fact that $K_n \geq 1$, together with the use of Jensen's inequality, that for all $t\leq 0$, 
\begin{equation}
\label{JENSENKN}
t \dE_{\alpha,0}[K_n] \leq \log \dE_{\alpha,0}[\exp(tK_n)] \leq t.
\end{equation}
However, we know from \cite[Section 3.3.]{Pit(06)} that
$$
\lim_{n\rightarrow+\infty}\frac{1}{n^{\alpha}} \dE_{\alpha,0}[K_{n}]=\frac{1}{\alpha\Gamma(\alpha)}. 
$$
Hence, we immediately obtain from \eqref{JENSENKN} that
for all $t\leq 0$, 
\begin{equation}
\label{CVGNEGLAPLACE}
\lim_{n\rightarrow \infty} \frac{1}{n} \log m_n(t)=0.
\end{equation}
Hereafter, we assume that $t>0$. For all $x, y \in \dR$, we have that $x^2-2xy \cos(\pi \alpha)+y^2 \geq \sin^2(\pi \alpha)y^2$.
Consequently, the function $G_\alpha$ given by \eqref{DEFGalpha} satisfies for all $x>0$ and $y>0$,
\begin{equation*}
\label{CRUDUBG}
G_\alpha(x,y) \leq \frac{\exp(-x^{1/\alpha})}{\pi \alpha \sin(\pi \alpha) y},
\end{equation*}
which implies that for all $y>0$,
\begin{equation}\label{INTUBG}
\int_0^\infty G_\alpha(x,y) dx \leq \frac{1}{\pi \alpha \sin(\pi \alpha) y}\int_0^\infty \exp(-x^{1/\alpha}) dx\leq \frac{\Gamma(\alpha)}{\pi \sin(\pi \alpha) y}.
\end{equation}
Hence, we deduce from the expression of $R_{n}(t)$ in \eqref{RNT} together with \eqref{INTUBG} that 
\begin{equation}
\label{RNTNUB}
0<R_n(t) \leq 
\frac{ \Gamma(\alpha) \Gamma(n-\alpha)}{\Gamma(n)\pi \sin(\pi \alpha) (1-e^{-t})}
\leq \frac{n^{1- \alpha} \Gamma(\alpha)}{(n-\alpha)\pi \sin(\pi \alpha) (1-e^{-t})}
\end{equation}
where  the right-hand side of \eqref{RNTNUB} is
due to Wendel's inequality, $\Gamma(n+1-\alpha)\leq n^{1-\alpha}\Gamma(n)$. 
Hereafter, denote
$$
d(t)=1 - \left(\frac{e^t -1}{e^t}\right)^{1/\alpha}.
$$
It follows from \eqref{MNT} and \eqref{RNTNUB} that
\begin{equation}
\label{ULBMNT}
\frac{1}{\alpha}\big(d(t) \big)^{-n}
\Big(1- \frac{n^{1-\alpha}\Gamma(1+\alpha) \big(d(t) \big)^{n} }{(n-\alpha)\pi \sin(\pi \alpha) (1-e^{-t})} \Big)< m_n(t) < \frac{1}{\alpha} \big(d(t) \big)^{-n}.
\end{equation}
Therefore, as $0<d(t)<1$, we obtain from the upper and the lower bounds in \eqref{ULBMNT} that
for all $t>0$,
\begin{equation}
\label{CVGLAPLACE0}
\lim_{n\rightarrow \infty} \frac{1}{n}\log m_n(t)=-\log d(t)=L_\alpha(t).
\end{equation}
The function $L_\alpha$ is finite and differentiable on $\mathbb{R}$, with $L_\alpha(t)=0$ for all $t \leq 0$. Then, G\"artner-Ellis theorem \cite[Theorem 2.3.6]{Dem(98)} completes the proof of Theorem \ref{T-LDP} in the special case $\theta=0$. 
It only remains to carry out the proof of Theorem \ref{T-LDP} in the case
$\theta \neq 0$. In particular, one can observe that the convergence \eqref{CVGNEGLAPLACE} also holds for all $t \leq 0$ as
soon as $\theta > -\alpha$. On the one hand, assume that $\theta >0$. Then, we obtain once again from Wendel's inequality that
\begin{align*}
\frac{\Gamma\left(\theta_\alpha+K_n\right)}{\Gamma(K_n)} &\leq \frac{\Gamma\left(\lfloor \theta_\alpha \rfloor+K_n\right)}{\Gamma(K_n)} \big(\lfloor \theta_\alpha \rfloor +K_n \big)^{\{\theta_\alpha\}}\\
& \leq \big(\lfloor \theta_\alpha \rfloor +K_n \big)^{\lfloor \theta_\alpha\rfloor} \big(\lfloor \theta_\alpha \rfloor +K_n \big)^{\{\theta_\alpha\}} \\
&\leq \big(\lfloor \theta_\alpha \rfloor +K_n \big)^{\theta_\alpha}\\
& \leq \big(\lfloor \theta_\alpha \rfloor + n \big)^{\theta_\alpha},
\end{align*}
where $\lfloor \theta_\alpha \rfloor$ and $\{ \theta_\alpha \}$ stand for the integer and fractional parts of $\theta_\alpha$. It follows from
the fact that the Euler Gamma function is strictly increasing on $[2,\infty[$ that, as soon as $K_n \geq 2$, $\Gamma\left(\theta_\alpha+K_n\right) > \Gamma\left(K_n\right)$. Consequently, as soon as $K_n \geq 2$, we deduce from \eqref{mgf_general} that for all $t>0$,
\begin{equation}
\label{PRMGFN0}
c_n(\alpha,\theta) \dE_{\alpha,0}[\exp(t K_n)] \leq m_n(t) \leq c_n(\alpha,\theta)  \big(\lfloor \theta_\alpha \rfloor + n \big)^{\theta_\alpha} \dE_{\alpha,0}[\exp(t K_n)].
\end{equation}
Furthermore, it is easy to see that
$$
\lim_{n\rightarrow \infty} \frac{1}{n}\log c_n(\alpha,\theta)=0.
$$
Hence, by taking the logarithm on both sides of \eqref{PRMGFN0}, we obtain from the squeeze theorem that for all $t >0$,
\begin{equation}
\label{CVGLAPLACEN0}
\lim_{n\rightarrow \infty} \frac{1}{n}\log m_n(t)= \lim_{n\rightarrow \infty} \frac{1}{n}\log \dE_{\alpha,0}[\exp(t K_n)]
=L_\alpha(t).
\end{equation}
On the other hand, assume that $-\alpha < \theta <0$. Since $-1 < \theta_{\alpha}<0$, $\theta_\alpha+K_n \geq 2$ as soon as $K_n \geq 3$, which implies that
$\Gamma\left(K_n\right)>\Gamma\left(\theta_\alpha+K_n\right)$. Moreover, we get from Gautschi's inequality that
$$
 \frac{\Gamma\left(\theta_\alpha+K_n\right)}{\Gamma(K_n)} > K_n^{\theta_\alpha} > n^{\theta_\alpha}.
$$
Therefore, as soon as $K_n \geq 3$, we obtain from \eqref{mgf_general} that for all $t>0$,
\begin{equation}
\label{PRMGFN00}
c_n(\alpha,\theta) n^{\theta_\alpha} \dE_{\alpha,0}[\exp(t K_n)] \leq m_n(t) \leq c_n(\alpha,\theta) \dE_{\alpha,0}[\exp(t K_n)].
\end{equation}
Finally, we deduce from \eqref{PRMGFN00}, together with the application of squeeze theorem, that the convergence \eqref{CVGLAPLACEN0} also holds in the case $-\alpha < \theta <0$, which completes the proof of Theorem \ref{T-LDP}.


\section{Proof of Theorem \ref{T-CI}}\label{sec3}
As for the proof of Theorem \ref{T-LDP}, we start by considering the special case $\theta=0$. In particular, for all $x \in (0,1)$ and for any $t>0$, it follows directly from Markov's inequality \cite{Ber(15)} that
\begin{equation}
\label{PRCI1}
 \dP(K_n \geq nx) =  \dP( \exp(t K_n) \geq \exp(n xt)) \leq \exp(-nxt) m_n(t).
\end{equation}
Hence, we immediately obtain from \eqref{ULBMNT} and \eqref{PRCI1} that for any $t>0$,
\begin{equation}
\label{PRCI2}
 \dP(K_n \geq nx) \leq \frac{1}{\alpha} \exp\big(\!\!-n(xt - L_\alpha(t) \big).
\end{equation}
Then, by taking the optimal unique value $t_x>0$ such that $L_\alpha^\prime(t_x)=x$, we find from \eqref{PRCI2} that
\begin{equation*}
 \dP(K_n \geq nx) \leq \frac{1}{\alpha} \exp\big(\!-nI_\alpha(x) \big)
\end{equation*}
as $I_\alpha(x)= xt_x -  L_\alpha(t_x)$. This completes the proof of Theorem \ref{T-CI} in the case $\theta=0$. Finally, the case $\theta >0$ and the case $-\alpha < \theta <0$ follow exactly along the same lines using \eqref{PRCI1} together with \eqref{PRMGFN0} and \eqref{PRMGFN00} respectively, which completes the proof of Theorem \ref{T-CI}.


\section{Concluding remarks}\label{sec4}

We proposed a new look on large deviations for the number $K_{n}$ of partition sets in the Ewens-Pitman model with $\alpha\in(0,1)$ and $\theta>-\alpha$, offering an alternative and concise proof of Feng-Hoppe large deviation principle. In addition, our approach allowed to establish a sharp concentration inequality for $K_{n}$ involving the rate function of the large deviation principle.

An interesting direction for future research is to refine Theorem \ref{T-LDP} in terms of sharp large deviations, in the spirit of the pioneering work of Bahadur-Rao on  the sample mean \cite{Bah(60)}; see also \cite{Bbr(24)} for sharp large deviations in the context of the number of descents in a random permutation, and the references therein. Along this direction, one may improve Theorem \ref{T-CI} by obtaining a more precise estimate of the pre-factor $P_{n}(\alpha,\theta)$. We argue that the moment-generating function \eqref{SHARPMNT1THETA_int} provides a promising starting point for developing both sharp large deviations and refined concentration inequalities for $K_{n}$. However, a lot of work remains to be done, particularly in the derivation of a sharp expansion of the three-parameter Mittag-Leffler function, which plays a crucial role in this analysis.

A large deviation principle for the number $K_{r,n}$ of partition subsets of size $r$ in the Ewens-Pitman model was first established in \cite{Fav(15)}, building on the original approach of \cite{Fen(98)}. A natural question is whether our approach can be adapted to provide an alternative proof of this result. If successful, this could lead to the derivation of a concentration inequality for $K_{r,n}$, which, to the best of our knowledge, remains an open problem in the literature. Furthermore, studying large deviations and concentration inequalities for functionals that involve both $K_{n}$ and $K_{r,n}$ is also a promising direction for future research.

\section*{Acknowledgement}

Stefano Favaro gratefully acknowledges the Italian Ministry of Education, University and Research (MIUR), “Dipartimenti di Eccellenza" grant 2023-2027.


\end{document}